\newtheorem{theorem}{Theorem}
\newtheorem{proposition}{Proposition}
\newcommand{\ds}{\displaystyle}
\newcommand{\dss}{\displaystyle\sum}
\newcommand{\E}{{\rm E}}
\newcommand{\lp}{\left (}
\newcommand{\rp}{\right )}
\newcommand{\cB}{\mathcal{B}}
\newcommand{\cH}{\mathcal{H}}
\newcommand{\cG}{\mathcal{G}}
\newcommand{\cP}{\mathcal{P}}
\newcommand{\cT}{\mathcal{T}}
\newcommand{\dP}{\cP_{n,k-1}^{(k)}}
\newcommand{\dlP}{\cP_{n,l}^{(k)}}
\DeclarePairedDelimiter{\ceil}{\lceil}{\rceil}
\begin{document}

\title{On the size-Ramsey number of tight paths}
\author{
Linyuan Lu
\thanks{University of South Carolina, Columbia, SC 29208,
({\tt lu@math.sc.edu}). This author was supported in part by NSF
grant DMS-1600811.} \and
Zhiyu Wang \thanks{University of South Carolina, Columbia, SC 29208,
({\tt zhiyuw@math.sc.edu}).} 
}

\maketitle

\begin{abstract}
  For any $r\geq 2$ and $k\geq 3$, the $r$-color size-Ramsey number $\hat R(\mathcal{G},r)$ of a $k$-uniform hypergraph $\mathcal{G}$
  is the smallest
  integer $m$ such that there exists a $k$-uniform hypergraph $\mathcal{H}$ on $m$ edges such that any coloring
  of the edges of $\mathcal{H}$ with $r$ colors yields a monochromatic copy of $\mathcal{G}$.
  Let $\mathcal{P}_{n,k-1}^{(k)}$ denote the $k$-uniform tight path on $n$ vertices.
Dudek, Fleur, Mubayi and R\H{o}dl showed that the size-Ramsey number of tight paths 
$\hat R(\mathcal{P}_{n,k-1}^{(k)}, 2) = O(n^{k-1-\alpha} (\log n)^{1+\alpha})$ where $\alpha = \frac{k-2}{\binom{k-1}{2}+1}$. 
 In this paper, we improve their bound by showing that
 $\hat R(\mathcal{P}_{n,k-1}^{(k)}, r) = O(r^k (n\log n)^{k/2})$
 for all $k\geq 3$ and $r\geq 2$.

\end{abstract}

\begin{section}{Introduction}

Given two simple graphs $G$ and $H$ and a positive integer $r$, say that $H \to (G)_r$ if every $r$-edge-coloring of $H$ results in a monochromatic copy of $G$ in $H$. In this notation, the Ramsey  number $R(G)$ of G is the minimum $n$ such that $K_n \to (G)_2$. The {\em size-Ramsey number} $\hat{R}(G,r)$ of $G$ is defined as the minimum number of edges in a graph $H$ such that $H \to (G)_r$, i.e.
\[\hat{R}(G,r) = min\{|E(H)|: H \to (G)_r\}.\]
When $r = 2$, we ignore $r$ and simply use $\hat{R}(G)$.

Size-Ramsey number was first studied by Erd\H{o}s, Faudree, Rousseau and Schelp \cite{Erdos} in 1978. By the definition of $R(G)$, we have $$\hat{R}(G) \leq \binom{R(G)}{2}.$$
Chv\'atal (see, e.g.\cite{Erdos}) showed that this bound is tight for complete graphs, i.e. $\hat{R}(K_n) = \binom{R(K_n)}{2}.$ Answering a question of Erd\H{o}s \cite{Erdos2}, Beck \cite{Beck} showed by a probabilistic construction that  
\[\hat{R}(P_n) = O(n).\]
Alon and Chung \cite{Alon-Chung} gave an explicit construction of a graph $G$ with $O(n)$ edges such that $G \to P_n$. Recently, Dudek and Pra\l at \cite{Dudek-Pralat} provided a simple alternative proof for this result (See also \cite{Letzter}).
The best upper bound $\hat{R}(P_n)\leq 74n$ is due to  Dudek and Pra\l at \cite{Dudek-Pralat2}
by considering a random $27$-regular graph of a proper order. 

 Dudek, Fleur, Mubayi, and R\H{o}dl \cite{DFMR} first initiated the study of size-Ramsey number in hypergraphs. A $k$-uniform hypergraph $\cG$ on a vertex set $V(\cG)$ is a family of $k$-element subsets (called edges) of $V(\cG)$. We use $E(\cG)$ to denote the edge set. Given $k$-uniform hypergraphs $\cG$ and $\cH$, we say that $\cH \to (\cG)_r$ if every $r$-edge-coloring of $\cH$ results in a monochromatic copy of $\cG$ in $\cH$. Define the {\em size-Ramsey number} $\hat{R}(\cG,r)$ of a $k$-uniform hypergraph $\cG$ as 
 \[\hat{R} (\cG, r) = min\{|E(\cH)|: \cH \to (\cG)_r\}.\]
When $r = 2$, we simply use $\hat{R} (\cG)$ for the ease of reference.

 Given integers  $1\leq l < k$ and $n \equiv l$ (mod $k-l$), an $l$-path $\dlP$ is a $k$-uniform hypergraph with vertex set $[n]$ and edge set $\{e_1, \cdots, e_m\}$, where $e_i = \{(i-1)(k-l)+1, (i-1)(k-l)+2, \cdots, (i-1)(k-l)+k\}$ and $m = \frac{n-l}{k-l}$, i.e. the edges are intervals of length $k$ in $[n]$ and consecutive edges intersect in exactly $l$ vertices. A $\cP_{n,1}^{(k)}$ is commonly referred as a {\em loose} path and a $\cP_{n,k-1}^{(k)}$ is called a {\em tight} path.

 Dudek, Fleur, Mubayi and R\H{o}dl \cite{DFMR} showed that when $l\leq \frac{k}{2}$, the size-Ramsey number of a path $\cP_{n,l}^{(k)}$ can be easily reduced to the graph case. In particular, they showed that if $1\leq l\leq \frac{k}{2}$, then 
\[\hat{R}\lp \cP_{n,l}^{(k)} \rp \leq \hat{R}(P_n) = O(n).  \]

For tight paths, they showed in the same paper that for fixed $k \geq 3$,
\[\hat{R}\lp \dP \rp = O(n^{k-1-\alpha} (\log n)^{1+\alpha}),\]
where $\alpha = (k-2)/(\binom{k-2}{2}+1)$. Observe that $\hat{R}\lp \cP_{n,l}^{(k)} \rp \leq \hat{R}\lp \dP \rp$. Thus any upper bound on the size-Ramsey number of tight paths is also an upper bound for
other $l$-path $\cP_{n,l}^{(k)}$.

Motivated by their approach, we use a different probabilistic construction
and improve the upper bound to $O((n\log n)^{k/2})$. In particular, we show the following result on the multi-color size-Ramsey number of tight paths in hypergraphs:

\begin{theorem}\label{tightpath}
 For any fixed $k \geq 3$, any $r \geq 2$, and sufficiently large $n$, we have
 \[\hat{R}\lp \dP, r\rp = O\lp r^{k}  (n \log n)^{\frac{k}{2}}\rp.\]
\end{theorem}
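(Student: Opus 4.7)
The plan is to use the probabilistic method: construct a random $k$-uniform hypergraph $\cH$ with $O(r^k (n\log n)^{k/2})$ edges, and show that with positive probability $\cH \to (\dP)_r$. A natural candidate is the binomial random $k$-uniform hypergraph on $N \geq n$ vertices in which each $k$-tuple is included independently with probability $p$, where $N = C n$ and $p = C r^k (\log n)^{k/2}/n^{k/2}$ are chosen so that $\binom{N}{k}p = \Theta(r^k(n\log n)^{k/2})$. A standard Chernoff bound then gives $|E(\cH)| = O(r^k(n\log n)^{k/2})$ with high probability, controlling the size of the hypergraph.

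The main task is to establish the Ramsey property. Given any $r$-coloring of $E(\cH)$, pigeonhole supplies a color class $\cH_i$ with at least $|E(\cH)|/r$ edges, and we need to show that $\cH_i$ contains a copy of $\dP$. Concretely, we aim to prove the following embedding statement: with probability $1-o(1)$ over the choice of $\cH$, every subhypergraph $\cH' \subseteq \cH$ with $|E(\cH')| \geq |E(\cH)|/r$ contains a tight path on $n$ vertices. Our strategy is a greedy, depth-first-search-style construction of the tight path in $\cH'$, augmented with a rotation argument (in the spirit of P\'osa's technique for Hamilton paths) to bypass local obstacles, while exploiting the pseudo-randomness of $\cH$ to control the set of available extensions.

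The hard part will be the embedding lemma itself. The difficulty is the rigidity of tight paths: each new edge must share $k-1$ vertices with the previous one, so extending depends on specific $(k-1)$-codegrees being positive. In our parameter range, the expected $(k-1)$-codegree in $\cH$ is $Np = \Theta(r^k (\log n)^{k/2} n^{1-k/2})$, which tends to $0$ for fixed $r$ and $k \geq 3$. Hence no purely codegree-based extension can work; we must exploit the global distribution of hyperedges, for instance by first building an absorbing substructure (a small sub-hypergraph that can ``swallow'' any short near-tight path) and then greedily extending a long tight path into it. The extra $(\log n)^{k/2}$ factor in $p$ should provide the slack needed both for the absorber to exist with high probability and for a union-bound (or Lov\'asz Local Lemma) argument to succeed across the exponentially large space of $r$-colorings. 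If the straightforward binomial model proves too sparse to handle the rigidity, an alternative is a random \emph{partite} construction aligned with the tight-path structure (one vertex per ``layer''), which naturally matches the embedding we are trying to perform and may better exploit the $(\log n)^{k/2}$ slack.
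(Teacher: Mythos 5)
Your base construction does not work, and the gap is fatal rather than technical. With $N=\Theta(n)$ and $p=\Theta(r^k(\log n)^{k/2}/n^{k/2})$, the binomial $k$-uniform hypergraph a.a.s.\ contains no copy of $\dP$ under \emph{any} coloring: the expected number of tight paths with $j$ edges is about $N^{k+j-1}p^{j}=N^{k-1}(Np)^{j}$, and since $Np=\Theta(r^k(\log n)^{k/2}n^{1-k/2})\to 0$ for $k\geq 3$, this drops below $1$ once $j$ exceeds a constant. Equivalently, all but an $o(1)$-fraction of the hyperedges lie in no ``tight pair'' at all, so the longest tight path in $\cH$ has bounded length. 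You correctly diagnose the vanishing $(k-1)$-codegree, but absorption, rotation, and the Local Lemma cannot repair this --- those tools help when a structure exists but is hard to find greedily, not when it is absent. (Separately, a union bound over all $r^{|E(\cH)|}$ colorings could never close; the paper avoids this by union-bounding only over the polynomially-described obstruction sets.)

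The paper's construction, which your closing sentence gestures at but does not reach, resolves the codegree problem by \emph{correlating} the hyperedges: take a random $2$-uniform graph $G$ on $k$ parts of size $cn$ arranged in a cycle (a random $C_k$-colorable graph) with edge probability $p=\sqrt{\log n/n}$, and let the hyperedges of $\cH$ be the proper $k$-cycles of $G$. Two hyperedges sharing $k-1$ vertices then share $k-2$ underlying graph edges, so extending a tight path by one vertex costs only two fresh graph edges: each of the $cn$ candidate vertices extends a given proper $(k-1)$-path with probability $p^2=\log n/n$, giving $c\log n\to\infty$ expected extensions, while the total edge count is $\Theta((cn)^kp^k)=\Theta(c^k(n\log n)^{k/2})$ as required. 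The Ramsey property is then proved not by embedding into an arbitrary dense subhypergraph directly, but by a counting contradiction: run the Dudek--La~Fleur--Mubayi--R\H{o}dl greedy algorithm on the largest color class, collect the dead ends into a trash set $\cB$ of at most $n$ disjoint proper $(k-1)$-paths, and use Chernoff and Kim--Vu concentration (with a union bound over the $\exp((1+o(1))kn\log n)$ choices of $\cB$, $A$, $C$) to show the largest color class would then have fewer than $|E(\cH)|/r$ edges. You would need both the correlated construction and this counting framework; neither is present in your proposal.
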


\end{section}

\begin{section}{Proof of Theorem \ref{tightpath}}
 
The approach of our proof is inspired by Dudek, Fleur, Mubayi and R\H{o}dl's approach in their proof of Theorem 2.8 in \cite{DFMR}. In their proof, they constructed their hypergraph by setting edges to be the $k$-cliques of an Erd\H{o}s-R\'{e}nyi random graph. Then they use a greedy algorithm to show that the number of edges of each color is smaller than $\frac{1}{r}$ fraction of the total number of edges, which gives a contradiction. Motivated by their approach, we use the same greedy algorithm but a different probabilistic construction of the hypergraph. Instead of using $k$-cliques of an Erd\H{o}s-R\'{e}nyi random graph as edges, we use $k$-cycles of a random $C_k$-colorable graph (which will be defined later) as edges.

Throughout the paper, we will use the following version of Chernoff inequalities for the binomial random variables $X \sim Bin(n,p)$ (for details, see, e.g. \cite{Chernoff}):

\begin{equation}\label{eq:chernoff1}
Pr \lp X \leq E(X) -\lambda \rp \leq exp \lp -\frac{\lambda^2}{2 E(X)} \rp.
\end{equation}

\begin{equation}\label{eq:chernoff2}
Pr \lp X \geq E(X) + \lambda \rp \leq exp \lp -\frac{\lambda^2}{2 (E(X)+ \lambda/3)} \rp.
\end{equation}

We follow  a similar notation as \cite{DFMR}. A graph $G$ is {\em $C_k$-colorable}
if there is a graph homomorphism $\pi$ mapping $G$ to the cycle $C_k$. That is, $V(G)$ can be partitioned
into $k$-parts $V_1\cup V_2\cup \cdots \cup V_k$ so that $E(G)\subseteq\bigcup\limits_{i=1}^k E(V_i, V_{i+1})$
with $V_{k+1}=V_1$ and $E(V_i, V_{i+1})$ denoting the set of edges between a vertex in $V_i$ and a vertex in $V_{i+1}$. 
For such a graph $G$, we say a $k$-cycle $C$ in $G$ is {\em proper} if it intersects each $V_i$
by exactly one vertex.  For $1\leq l\leq k-1$, we say a path $P_l$ of $l$ vertices in $G$ is {\em proper}
if it intersects each $V_i$ by at most one vertex.
Let $\cT_{k-1}(G)$ denote the set of all proper $(k-1)$-paths in $G$.
Let $\cB \subseteq \cT_{k-1}$ be a family of pairwise vertex-disjoint proper
$(k-1)$-paths. Let $t_{\cB}$ be the total number of proper $k$-cycles in $G$ that extend some $B\in \cB$. For $A\subseteq V$,  define $y_{A, \cB}$ as the number of proper $k$-cycles in $G$ that
extend a proper $(k-1)$-path $B\in \cB$ with a vertex $v\in A\cup \bigcup_{B\in \cB} V(B)$.
Given $C\subseteq V(G)$, we use $z_C$ to denote the number of proper $k$-cycles in $G$ that intersect $C$. We use $t_k$ to denote the total number of proper $k$-cycles in $G$.

We say an event in a probability space holds {\em a.a.s.}\ (aka, {\em asymptotically almost surely}) if the probability that it holds tends to $1$ as $n$ goes to infinity. Finally, we use $\log n$ to denote natural logarithms.

\begin{proposition}\label{prop2}
  For every $r \geq 2$, $k \geq 3$, and sufficiently large $n$, there exists a $C_k$-colorable graph
  $G = (V,E)$ satisfying the following:
\begin{enumerate}[(i)]
\item For every $\cB$ consisting of $n$ pairwise vertex-disjoint
  proper $(k-1)$-paths, and every $A \subseteq V\setminus \bigcup_{B\in \cB} V(B)$ with 
$|A|\leq n$,   we have 
		\[y_{A,\cB} < \frac{1}{2kr} t_{\cB}.\]
	\item For every $C\subseteq  V$ with  $|C| \leq (k-1)n$, we have
		\[z_C< \frac{t_k}{2r}.\]
	\item The total number of proper $k$-cycles satisfies 
		\[t_k =  O(r^{k} (n\log n)^{k/2}).\]
\end{enumerate}
\end{proposition}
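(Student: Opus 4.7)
The plan is to take $G$ to be a random $C_k$-colorable graph with balanced parts $|V_1|=\cdots=|V_k|=N:=\lceil Crn\rceil$ (for a constant $C=C(k)$ to be chosen large) and each potential edge between consecutive classes $V_i,V_{i+1}$ included independently with probability $p:=\sqrt{(\log n)/n}$. These parameters give $(Np)^k=\Theta_k(r^k(n\log n)^{k/2})$, matching the target in (iii). The strategy is to verify that each of (i), (ii), (iii) holds with probability bounded away from $0$, so that some realization of $G$ satisfies all three simultaneously. Condition (iii) is immediate from Markov's inequality applied to $t_k$, whose expectation $N^kp^k/(2k)$ is already of the required order.

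For (i), the key structural observation is that each extension of a proper $(k-1)$-path $B\in\mathcal B$ to a proper $k$-cycle depends on exactly two edges of $G$: those joining the extending vertex $v\in V_{j(B)}$ (the unique $C_k$-class missed by $B$) to the two endpoints of $B$. Because the paths in $\mathcal B$ are pairwise vertex-disjoint, the $2$-edge indicators over all pairs $(B,v)$ with $v\notin\bigcup_{B'\in\mathcal B}V(B')$ are supported on pairwise disjoint edge pairs, and hence are mutually independent $\operatorname{Ber}(p^2)$ random variables. The Chernoff inequalities \eqref{eq:chernoff1} and \eqref{eq:chernoff2} then give $t_{\mathcal B}\ge(1-o(1))nNp^2=(1-o(1))Crn\log n$ and bound the independent portion of $y_{A,\mathcal B}$ sharply. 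The remaining contribution from extension vertices $v\in\bigcup V(B')$ involves at most $n^2$ indicators whose dependency graph has degree $O(k)$; a Chernoff-type bound (for instance via Janson's or Suen's inequality) keeps this contribution of order $O(n\log n)$ with exponentially small failure probability. A union bound over the $\exp(O_k(n\log n))$ admissible pairs $(\mathcal B,A)$ then closes (i), provided $C$ is chosen large enough as a function of $k$.

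For (ii), the first-moment bound $E[z_C]\le|C|N^{k-1}p^k$ gives $E[z_C]/E[t_k]\le k|C|/N\le k(k-1)/(Cr)$, which is strictly less than $1/(2r)$ for $C>2k(k-1)$. To promote this to a statement uniform over all $C$ with $|C|\le(k-1)n$, I plan to establish a pseudo-randomness property of $G$: a.a.s., for every pair $(u,w)\in V_i\times V_{i+2}$ the codegree $|N(u)\cap N(w)\cap V_{i+1}|$ equals $(1+o(1))Np^2$ (an immediate Chernoff plus union bound over $O(N^2)$ pairs). Bootstrapping this estimate along $C_k$ concentrates the cycle-degree $d_v:=\#\{\text{proper }k\text{-cycles through }v\}$ around $N^{k-1}p^k$ uniformly in $v$, after which $z_C\le\sum_{v\in C}d_v\le(1+o(1))(k-1)nN^{k-1}p^k<t_k/(2r)$ for $C$ large.

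The main technical obstacle is the bootstrap in (ii): each $d_v$ is a polynomial of degree $k$ in independent edge indicators, to which Chernoff does not apply directly. The cleanest remedy is to reveal edges in $k$ successive layers around $C_k$ and apply Chernoff conditionally at each layer, growing the codegree estimates inductively; alternatively one may invoke a polynomial concentration inequality such as Kim--Vu. Since only constant-probability bounds on each of (i)--(iii) are required to witness the existence of $G$, the constants $C$ and the $o(1)$ slacks have considerable room.
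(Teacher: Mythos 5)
Your proposal is correct and follows essentially the same route as the paper: the same random $C_k$-colorable graph with parts of size $\Theta(krn)$ and $p=\sqrt{(\log n)/n}$, Chernoff plus a union bound over the $\exp(O_k(n\log n))$ choices of $(\cB,A)$ for (i), and concentration of the per-vertex proper-$k$-cycle count (the paper uses Kim--Vu, which you name as one of your two options) to get (ii) and (iii). Your extra care about the few dependent extension indicators in $y_{A,\cB}$ (when the extending vertex lies on another path of $\cB$) is a legitimate refinement of a point the paper glosses over with its $\mathrm{Bin}(2n^2,p^2)$ domination, but it does not change the method.
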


\begin{proof}
  Set $c = 16k^2r$ and $p=\frac{\sqrt{\log n}}{\sqrt{n}}$.
  Consider the following random $C_k$-colorable graph $G$.
  Let $V(G) = V_1 \cup V_2 \cup \cdots \cup V_{k}$ be the disjoint union of $k$ sets. Each $V_i$ (for $1\leq i\leq k$) has the same size $cn$. For any pair of vertices $\{u,v\}$ in two consecutive parts,  i.e., there is an $i\in [k]$, such that $u\in V_i$ and $v\in V_{i+1}$ (with the convention $V_{k+1}=V_1$), add
  $uv$ as an edge of $G$ with probability $p$ independently. There is no edge
  inside each $V_i$ or between two non-consecutive parts.
  

  We will show that this random $C_k$-colorable graph $G$ satisfies a.a.s. $(i)- (iii)$.

  First we show that $G$ a.a.s. satisfies $(i)$. For a fixed family $\cB$
  of $n$ pairwise vertex-disjoint proper $(k-1)$-paths, we would like to give a lower bound of
  $t_{\cB}$. For each proper $(k-1)$-path $B\in \cB$, there are $cn$ vertices that can extend
  $B$ into a proper $k$-cycle, each with probability $p^2$ independently.  Thus, we have
  $t_{\cB} \sim Bin(cn^2, p^2)$ with 
\[E[t_{\cB}] =cn^2p^2 =c n \log n=16k^2r n\log n. \]
Applying Chernoff inequality, we have 
\begin{align*}
Pr \lp t_{\cB} \leq \frac{E[t_{\cB}])}{2}\rp  & \leq exp \lp  -\frac{1}{8} E[t_{\cB}]    \rp\\
  &=  exp \lp   -2k^2rn \log n\rp.
\end{align*}

Now for fixed $A\subseteq V\setminus \bigcup_{B\in \cB} V(B)$,
we estimate the upper bound of $y_{A,\cB}$.   Without loss of generality, we can assume that $|A| = n$.
We have $y_{A,\cB} \leq Y \sim Bin(2n^2, p^2)$, thus
\[E[Y] = 2n^2 p^2 = 2n \log n.\]

Thus if we apply the Chernoff bound \eqref{eq:chernoff2} with $\lambda = (2k-1) E[Y]$, then
 \begin{align*}
	Pr \lp  Y \geq \frac{1}{4kr}  E[t_{\cB}]\rp &= Pr \lp Y \geq  2k E[Y]\rp \\
                                                             &= Pr \lp Y \geq E[Y] + \lambda \rp \\
                                              &\leq \exp \lp -\frac{\lambda^2}{2(\E[Y]+\lambda/3)} \rp\\
				 & \leq \exp \lp - \frac{3(2k-1)^2}{2k+2} n \log n\rp.
\end{align*}

The number of possible choices of $\cB$ is upper bounded by $\lp  \binom{cn}{n} \cdot n!  \rp^k$. The number of possible choices of $A$ and $\cB$ is upper bounded by $\lp\binom{cn}{n,\ceil{n/k}} \cdot n!\rp^k \leq \lp\binom{cn}{n,n} \cdot n!\rp^k $. Stirling approximation of binomial coefficient gives us that 
\begin{align*}
	\log \lp\binom{cn}{n} \cdot n!\rp^k &= (1+o(1)) \lp k n \log n\rp,\\
	\log \lp\binom{cn}{n,n} \cdot n!\rp^k &= (1+o(1)) \lp k n \log n\rp.
\end{align*}

Therefore by the union bound, we have
\begin{align*}
Pr \lp \bigcup_{\cB} \{ t_{\cB}  \leq \frac{E[t_{\cB}]}{2}\}\rp  & \leq \lp\binom{cn}{n} \cdot n!\rp^k Pr \lp t_{\cB} \leq \frac{E[t_{\cB}]}{2}\rp \\
			& \leq exp\lp   (1+o(1))kn \log n-  2k^2rn \log n\rp\\
			&  = o(1).
\end{align*}

Similarly, we have
\begin{align*}
  Pr \lp \bigcup_{A, \cB} \{ y_{A, \cB}  \geq \frac{1}{4kr} E[t_{\cB}]  \}\rp  & \leq \lp\binom{cn}{n,n} \cdot n!\rp^k Pr \lp  Y \geq \frac{1}{4kr} E[t_{\cB}]\rp \\
	& \leq exp\lp   (1+o(1))kn \log n-  \frac{3(2k-1)^2}{2k+2}n \log n\rp\\  
			&  = o(1).
\end{align*}
In the last step, we observe $\frac{3(2k-1)^2}{2k+2}> k$ for all $k\geq 3$.

Therefore, combining previous inequalities, it follows that for all $A,\cB$ satisfying the condition in $(i)$, we have, a.a.s.,
\[y_{A,\cB} < \frac{1}{4kr} E[t_{\cB}] \leq \frac{1}{2kr} t_{\cB}.\]
This finishes the proof of $(i)$.

Now we will prove that $G$ satisfies $(ii)$ and $(iii)$ a.a.s.

We will use the Kim-Vu inequality \cite{Kim-Vu} stated as below:
\begin{quotation}\it
Let $H$ be a (weighted) hypergraph with $V(H) = [n]$. Edge edge $e$ has some weight $w(e)$. Suppose $\{t_i: i\in [n]\}$ is a set of Bernoulli independent random variables with probability $p$ of being $1$. Consider the polynomial 
\[Y_H = \dss_{e\in E(H)} w(e) \ds\prod_{s\in e} t_s.\]
Furthermore, for  a subset $A$ of $V(H)$, define \[Y_{H_A} = \dss_{e, A\subset e} w(e) \ds\prod_{i\in e\backslash A} t_i.\]
If we define $E_i(H) = \ds\max_{A\subset V(H), |A| = i} E(Y_{H_A})$, $E(H) = \ds\max_{i\geq 0} E_i(H)$ and $E'(H) = \ds\max_{i\geq 1} E_i(H)$, then
\begin{equation}\label{Kim-Vu}
Pr \lp |Y_H- E_0(H)| > a_k (E(H)E'(H))^{1/2} \lambda^k\rp = O\lp exp(-\lambda + (k-1) \log n)\rp
\end{equation} for any positive number $\lambda > 1$ and $a_k = 8^k (k!)^{1/2}$.
\end{quotation}

In our context, for a fixed $v \in V(G)$, let $H$ be the $k$-uniform hypergraph constructed by the proper $k$-cycles of $G$ containing $v$. The edge set of $H$ is the collection of all $k$-tuples $\{v v_1, v_1v_2, \cdots, v_{k-2}v_{k-1}, v_{k-1} v\}$ such that $vv_1v_2 \cdots v_{k-1} v$ is a proper $k$-cycle in $G$ and all edges have weight $1$.

Fix $v\in V(G)$. we let $X_v$ denote the number of proper $k$-cycles in $G$ that contain $v$. Then it's not hard to see that
\[E_0(X_v) = E(X_v) = (cn)^{k-1} p^k = c^{k-1} n^{\frac{k-2}{2}} (\log n)^{\frac{k}{2}}.\]
\[E'(X_v) = (cn)^{k-2} p^{k-1} = c^{k-2} n^{\frac{k-3}{2}}  (\log n)^{\frac{k-1}{2}}.\]

Applying Kim-Vu inequality with $\lambda = 2(k-1) \log n$, we get that for each $v\in V(G)$,

\[ Pr \lp |X_v- E_0(X_v)| > a_k (E(X_v)E'(X_v))^{1/2} \lambda^k\rp = O\lp exp(- (k-1) \log n)\rp.\]

Observe that $a_k (E(X_v)E'(X_v))^{1/2} \lambda^k = o(E_0(X_v))$. Applying union bound for all $v\in V(G)$, we obtain that a.a.s that
 $$X_v  =  (1\pm o(1)) (cn)^{k-1}p^k= (1\pm o(1))c^{k-1}  n^{\frac{k}{2}-1} (\log n)^{\frac{k}{2}}.$$
Recall that $t_k$ denotes the total number of proper $k$-cycles in $G$ and $z_C$ denotes the number of proper $k$-cycles in $G$ that intersect $C$. Suppose $|C| \leq (k-1)n.$ Then 
\[z_C \leq (1+o(1)) (k-1)n c^{k-1}  n^{\frac{k}{2}-1}(\log n)^{\frac{k}{2}}=(1+o(1)) (k-1) c^{k-1} (n \log n)^{\frac{k}{2}}. \]
Note that $t_k = \frac{1}{k}\dss_{v\in V(G)}X_v$. Thus 
\begin{align*}
 t_k  & \geq \frac{1}{k} (1-o(1)) kcn \cdot c^{k-1}  n^{\frac{k}{2}-1} (\log n)^{\frac{k}{2}}  \\
        & \geq (1-o(1)) c^k (n\log n)^{\frac{k}{2}}.
\end{align*}
Since $c= 16k^2r$, we have that for n sufficiently large, 
\[z_C < \frac{t_k}{2r}.\]
Moreover, similar to the above calculation, we have that a.a.s.,
\[t_k \leq (1+o(1)) c^k (n\log n)^{\frac{k}{2}} = O(r^{k} (n\log n)^{\frac{k}{2}}).\]
\end{proof}

Now we will prove the main result. We use the same greedy algorithm approach by Dudek, Fleur, Mubayi and R\H{o}dl in \cite{DFMR}.\\


\begin{proof}[Proof of Theorem \ref{tightpath}:]
We show that there exists a $k$-uniform hypergraph $\cH$ with $|E(\cH)| = O(r^k n^{\frac{k}{2}}(\log n)^{\frac{k}{2}})$ such that any $r$-coloring of the edges of $\cH$ yields a monochromatic copy of $\dP$.

Let $G$ be the graph constructed from Proposition \ref{prop2} for $n$ sufficiently large. Let $\cH$ be a $k$-uniform hypergraph such that $V(\cH) = V(G)$ and $E(\cH)$ be the collection of all proper $k$-cycles in $G$.

Take an arbitrary $r$-coloring of the edges $\cH_0 = \cH$ and assume that there is no monochromatic $\dP$. Without loss of generality, suppose the color class with the most number of edges is blue. We will consider the following greedy algorithm:

\begin{enumerate}[(1)]
	\item  Let $\cB = \emptyset$ be a {\em trash set} of proper $(k-1)$-paths in $G$. Let $A$ be a blue tight path in $\cH$ that we will iteratively modify. Throughout the process, let $U = V(\cH) \backslash \lp V(A) \cup \bigcup_{B\in \cB} V(B)\rp$ be the set of {\em unused} vertices. If at any point $|\cB| = n$, terminate.
	\item If possible, choose a blue edge $v_1 v_2\cdots v_{k-1} v_k$ from $U$ and put these vertices into $A$ and set the pointer to $v_k$. Otherwise, if not possible, terminate.
	\item  Suppose the pointer is at $v_i$ and $v_{i-k+2}, \cdots, v_{i-1}, v_i$ are the last $k-1$ vertices of the constructed blue path $A$. There are two cases:
		\begin{description}
			\item Case 1: If there exists a vertex $u \in U$ such that $v_{i-k+2},\cdots,v_{i-1},v_i,u$ form a blue edge in $\cH$, then we {\em extend} $P$, i.e. add $v_{i+1} = u$ into $A$. Set the pointer to $v_{i+1}$ and restart Step $(3)$. 
			\item Case 2: Otherwise, remove the last $k-1$ vertices from $A$ and set $\cB = \cB \cup \{\{v_{i-k+2}, \cdots, v_{i-1}, v_i\}\}$. Set the pointer to $v_{i-k+1}$. Now if $|A| <k$, then set $A = \emptyset$ and go to Step $(2)$. Otherwise, restart Step $(3)$.
		\end{description}
\end{enumerate}	
	
Note that this procedure will terminate under two circumstances: either $|\cB| = n$ or there is no blue edge in $U$.

Let us first consider the case when $|\cB| = n$, i.e. there are $n$ pairwise vertex-disjoint proper $(k-1)$-paths  in $\cB$. Moreover, $|A| \leq n$ since there is no blue path of $n$ vertices. Applying Proposition $\ref{prop2}$ with sets $A$ and $\cB$, we obtain that  
\[y_{A,\cB} < \frac{1}{2kr} t_{\cB}.\] 
Observe that every edge of $\cH$ that extends  some $B \in \cB$ with a vertex from $V(\cH_0) \backslash \lp V(A) \cup \bigcup\limits_{B\in \cB_m} B\rp$ must be non-blue. Therefore, the number of blue edges of $\cH$ that contain some $B\in \cB$ as subgraph is at most $y_{A,\cB}.$

Consider $A, \cB$ as $A_0, \cB_0$ respectively. Now remove all the blue edges from $\cH_0$ that contain some $B\in \cB_0$ as subgraph and denote the resulting hypergraph as $\cH_1$. Perform the greedy procedure again on $\cH_1$. This will generate a new $A_1$ and $\cB_1$.  Applying Proposition \ref{prop2} again, we have  $y_{A_1, \cB_1} \leq  \frac{1}{2kr} t_{\cB_1}.$  Keep repeating the procedure until it is no longer possible. Observe that $\cB_i \cap \cB_j = \emptyset$ for $i\neq j$.

When the above procedure can not be repeated anymore, we are in the case that $|\cB_m| < n$ for some positive integer $m$ and there are no more blue edges in $V(\cH) \backslash \bigcup\limits_{B\in \cB_m} B$. In this case, $A_m = \emptyset$ and all the blue edges remaining in $\cH_m$ have to intersect the set $C = \bigcup\limits_{B\in \cB_m} B$. By Proposition \ref{prop2}, it follows that
\[z_C < \frac{1}{2r} t_k.\]

Let $e_b(\cH)$ denote the total number of blue edges in $\cH$. We have
\begin{align*}
e_b(\cH)  & \leq  \dss_{i=0}^{m-1}  y_{A_i, \cB_i} +  z_C\\
          & < \dss_{i=0}^{m-1} \frac{1}{2kr} t_{\cB_i}  + \frac{1}{2r} t_k.
\end{align*}
Note that every proper $k$-cycle can extend exactly $k$ proper $(k-1)$-paths.
We have $\dss_{i=0}^{m-1} t_{\cB_i}\leq kt_k$. Thus,
\begin{align*}
e_b(\cH)	     & <  \frac{1}{2kr} \dss_{i=0}^{m-1} t_{\cB_i}  + \frac{1}{2r} t_k\\
	     & \leq  \frac{1}{2r} t_k  + \frac{1}{2r} t_k\\
	     & = \frac{1}{r} |E(\cH)|.
\end{align*}
The conclusion is that the number of blue edges in $\cH$ is strictly smaller than $\frac{1}{r}$ of the total number of edges in $\cH$, which contradicts that blue is the color class with the most number of edges of $\cH$. 
\end{proof}
\end{section}

\end{document}